\documentclass[12pt,a4paper]{amsart}
\allowdisplaybreaks[1]

\usepackage{amstext}
\usepackage{amsthm}
\usepackage{amssymb}

\newtheorem{thm}{Theorem}[section]
\newtheorem{lem}[thm]{Lemma}

\theoremstyle{definition}
\newtheorem{defn}[thm]{Definition}

\theoremstyle{remark}
\newtheorem{rem}[thm]{Remark}

\begin{document}

\title{An interpolation of Ohno's relation to complex functions}

\author{Minoru Hirose}
\address[Minoru Hirose]{Faculty of Mathematics, Kyushu University 744, Motooka, Nishi-ku,
Fukuoka, 819-0395, Japan}
\email{m-hirose@math.kyushu-u.ac.jp}

\author{Hideki Murahara}
\address[Hideki Murahara]{Nakamura Gakuen University Graduate School, 5-7-1, Befu, Jonan-ku,
Fukuoka, 814-0198, Japan}
\email{hmurahara@nakamura-u.ac.jp}

\author{Tomokazu Onozuka}
\address[Tomokazu Onozuka]{Multiple Zeta Research Center, Kyushu University 744, Motooka, Nishi-ku,
Fukuoka, 819-0395, Japan}
\email{t-onozuka@math.kyushu-u.ac.jp}

\subjclass[2010]{Primary 11M32}
\keywords{Multiple zeta function, Multiple zeta values, Ohno's relation}

\begin{abstract}
 Ohno's relation is a well known formula among multiple zeta values.
 In this paper, we present its interpolation to complex functions. 
\end{abstract}

\maketitle

\section{Introduction}
For complex numbers $s_1,\ldots,s_r \in\mathbb{C}$, we define the multiple zeta function (MZF) by
\begin{align*}
 \zeta(s_{1},\dots,s_{r}):=\sum_{1\le n_{1}<\cdots<n_{r}}\frac{1}{n_{1}^{s_{1}}\cdots n_{r}^{s_{r}}}.
\end{align*}
Matsumoto \cite{Mat02} proved that this series is absolutely convergent
in the domain 
\[
 \{(s_{1},\ldots,s_{r})\in\mathbb{C}^{r}\;|\;\Re(s(l,r))>r-l+1\;(1\leq l\leq r)\},
\]
where $s(l,r):=s_{l}+\cdots+s_{r}$. Akiyama-Egami-Tanigawa
\cite{AET01} and Zhao \cite{Zha00} independently proved that $\zeta(s_{1},\dots,s_{r})$
is meromorphically continued to the whole space $\mathbb{C}^{r}$.
The special values $\zeta(k_{1},\dots,k_{r})$ ($k_{i}\in\mathbb{Z}_{\ge1}\,(i=1,\ldots,r-1),$
$k_{r}\in\mathbb{Z}_{\ge2}$) of MZF are called the multiple zeta values (MZVs). 
The MZVs are real numbers and known to satisfy many kinds
of algebraic relations over $\mathbb{Q}$. 
One of the most well known formulas in this field is Ohno's relation.
We say that an index $(k_1\ldots,k_r)\in\mathbb{Z}_{\ge1}^r$ is admissible if $k_r\ge2$.
\begin{defn}
For an admissible index 
 \[
  \boldsymbol{k}:=(\underbrace{1,\ldots,1}_{a_1-1},b_1+1,\dots,\underbrace{1,\ldots,1}_{a_l-1},b_l+1) \quad (a_p, b_q\ge1),
 \]
we define the dual index of $\boldsymbol{k}$ by 
 \[
  \boldsymbol{k}^\dagger :=(\underbrace{1,\ldots,1}_{b_l-1},a_l+1,\dots,\underbrace{1,\ldots,1}_{b_1-1},a_1+1).
 \]
\end{defn}
\begin{thm}[Ohno's relation; Ohno \cite{Oho99}] \label{ohno}
 For an admissible index $(k_1,\ldots,k_r)\in\mathbb{Z}_{\ge1}^r$ and $m\in\mathbb{Z}_{\ge 0}$, we have
 \begin{align*}
  \sum_{\substack{ e_1+\cdots+e_r=m \\ e_i\ge0\,(1\le i\le r) }}
  \zeta (k_1+e_1,\ldots,k_r+e_r) 
  =\sum_{\substack{ e'_1+\cdots+e'_{r'}=m \\ e'_i\ge0\,(1\le i\le r') }}
  \zeta (k'_1+e'_1,\ldots,k'_{r'}+e'_{r'}), 
 \end{align*}
 where the index $(k'_1,\ldots,k'_{r'})$ is the dual index of $(k_1,\ldots,k_r)$.
\end{thm}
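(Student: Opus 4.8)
The plan is to pass from multiple zeta values to their iterated integral representations, where both duality and Ohno's relation acquire a geometric meaning. Writing $\omega_0=\tfrac{dt}{t}$ and $\omega_1=\tfrac{dt}{1-t}$, the admissible index $\boldsymbol{k}=(k_1,\dots,k_r)$ corresponds to the word
\[
 w(\boldsymbol{k})=\omega_1\,\omega_0^{k_1-1}\,\omega_1\,\omega_0^{k_2-1}\cdots\omega_1\,\omega_0^{k_r-1}
\]
of length $\wt(\boldsymbol{k})=k_1+\cdots+k_r$, and $\zeta(\boldsymbol{k})=\int_{0<t_1<\cdots<t_{\wt(\boldsymbol{k})}<1}w(\boldsymbol{k})$, the letters being read along the increasing chain of variables. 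The first thing I would record is that the substitution $t_i\mapsto 1-t_{\wt(\boldsymbol{k})+1-i}$ reverses the simplex and exchanges $\omega_0\leftrightarrow\omega_1$; tracing through the block structure $w(\boldsymbol{k})=\omega_1^{a_1}\omega_0^{b_1}\cdots\omega_1^{a_l}\omega_0^{b_l}$ shows that it carries $w(\boldsymbol{k})$ to $w(\boldsymbol{k}^\dagger)$, exactly matching the combinatorial duality of the Definition. Hence $\zeta(\boldsymbol{k})=\zeta(\boldsymbol{k}^\dagger)$, which is the $m=0$ case of the theorem.

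For general $m$ I would organize everything into the generating function
\[
 G(\boldsymbol{k};u):=\sum_{m\ge0}u^m\sum_{\substack{e_1+\cdots+e_r=m\\ e_i\ge0}}\zeta(k_1+e_1,\dots,k_r+e_r),
\]
so that the theorem becomes the single identity $G(\boldsymbol{k};u)=G(\boldsymbol{k}^\dagger;u)$ of power series in $u$. Adding $e_i$ to $k_i$ amounts to inserting $e_i$ extra copies of $\omega_0$ at the end of the $i$-th block of $w(\boldsymbol{k})$. Since these insertions occur in disjoint gaps of the simplex, the sum over all $e_i$ factors through each gap, and the elementary resummation $\sum_{e\ge0}u^e\int_{p<s_1<\cdots<s_e<q}\prod\tfrac{ds_j}{s_j}=\sum_{e\ge0}\tfrac{u^e}{e!}\bigl(\log\tfrac{q}{p}\bigr)^e=(q/p)^u$ collapses the insertions in each gap into a single factor $(q/p)^u$, with $p,q$ the two structural variables bounding that gap (and $q=1$ for the last block). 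Thus $G(\boldsymbol{k};u)$ is represented by one $u$-deformed iterated integral over the fixed simplex: the integrand is $w(\boldsymbol{k})$ multiplied by an explicit product of powers $t_j^{\pm u}$.

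The hard part will be the equality $G(\boldsymbol{k};u)=G(\boldsymbol{k}^\dagger;u)$. The naive hope, that the deformed integrands are exchanged by $t_i\mapsto 1-t_{\wt(\boldsymbol{k})+1-i}$ as in the $m=0$ case, is false: the substitution turns $\prod t_j^{\pm u}$ into a factor built from the $1-t_j$, which does not coincide with the deformation attached to $w(\boldsymbol{k}^\dagger)$, so the symmetry survives only after integration. To bridge this gap I would use a transport (connected-sum) argument: introduce a two-index quantity $Z_u(\boldsymbol{k};\boldsymbol{l})$ formed from the two multiple sums glued by a Beta-type connector deformed by $u$, whose $u$-expansion reproduces the Ohno insertions, and prove a transport relation that moves one box at a time from the $\boldsymbol{k}$-slot to the $\boldsymbol{l}$-slot while leaving $Z_u$ invariant. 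Iterating the two elementary moves that build $\boldsymbol{k}^\dagger$ from $\boldsymbol{k}$ then yields $Z_u(\boldsymbol{k};\varnothing)=Z_u(\varnothing;\boldsymbol{k}^\dagger)$, i.e. $G(\boldsymbol{k};u)=G(\boldsymbol{k}^\dagger;u)$, and extracting the coefficient of $u^m$ gives the theorem. The crux, and the step I expect to absorb most of the work, is the verification of the transport relation for the deformed connector, equivalently an explicit hypergeometric or partial-fraction identity, since this is precisely where the $m>0$ content beyond plain duality is concentrated.
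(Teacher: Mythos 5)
First, a point of orientation: the paper does not prove Theorem \ref{ohno} at all. It is quoted from Ohno \cite{Oho99}, and the paper's logic runs in the opposite direction to yours: Ohno's relation is used as an \emph{input} (via Lemma \ref{ikohno} and the Dirichlet-series uniqueness Lemma \ref{dirichlet}) to establish the interpolation $I_{\boldsymbol{k}}(s)=I_{\boldsymbol{k}^\dagger}(s)$ of Theorem \ref{main}. So your attempt can only be judged against known proofs of Ohno's relation, not against anything in this paper.

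Judged on that basis, your outline is sound as far as it goes, but it stops exactly where the theorem starts. The iterated-integral representation and the substitution $t_i\mapsto 1-t_{\wt(\boldsymbol{k})+1-i}$ do prove the $m=0$ case (duality); the generating function $G(\boldsymbol{k};u)$ does encode the full statement as the single identity $G(\boldsymbol{k};u)=G(\boldsymbol{k}^\dagger;u)$ (note the cleaner closed form $G(\boldsymbol{k};u)=\sum_{0<n_1<\cdots<n_r}\prod_{i=1}^{r}\frac{1}{n_i^{k_i-1}(n_i-u)}$, obtained by summing the geometric series in each $e_i$, which is easier to handle than the deformed integral); and you are right that the naive substitution fails once $u\neq 0$. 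But the step that carries all of the content beyond duality --- the ``transport (connected-sum) argument'' --- is invoked rather than executed: you never define the connector, never state the transport relation, and never verify it, as you yourself concede. To close the gap along the route you chose (this is Seki--Yamamoto's proof) one needs the connected sum built from the deformation $\frac{1}{n^k}\mapsto\frac{1}{n^{k-1}(n-u)}$ and the connector $c_u(m,n)=\frac{\Gamma(m+1)\,\Gamma(n+1)\,\Gamma(1-u)}{\Gamma(m+n+1-u)}$, together with the two transport identities $\sum_{m>a}\frac{c_u(m,n)}{m-u}=\frac{1}{n}\,c_u(a,n)$ and its mirror image, proved by telescoping; iterating them converts each trailing ``$1$'' on one side into a ``$+1$'' on the last entry of the other side and terminates precisely at the dual index. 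Until such a lemma is stated and proved, what you have is a correct strategy plus an honest acknowledgment of a hole, not a proof. (Alternative ways to fill the same hole: Ohno's original induction using the sum formula, or Bradley's evaluation of the deformed integral via Gauss's ${}_2F_1$ summation.)
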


From an analytic point of view, Matsumoto \cite{Mat06} raised the
question whether the known relations among MZVs are valid only
for positive integers or not. 
It is known that the harmonic relations,
e.g., $\zeta(s_{1})\zeta(s_{2})=\zeta(s_{1},s_{2})+\zeta(s_{2},s_{1})+\zeta(s_{1}+s_{2})$
are valid not only for positive integers but for complex numbers.
Unfortunately, there are no known such relations except for the harmonic relations above or relations which contain infinite sums of MZF obtained by the authors (see Hirose-Murahara-Onozuka \cite{HMO18}). 

As a weaker version of the question, we can consider the problem whether the known relaions among MZVs 
can be interpolated by complex variable functions. 
As related to this question, there are several studies using the Mordell-Tornheim zeta functions 
(see e.g., Matsumoto-Tsumura \cite{MT05}). 
Based on such circumstances, we give a complex variable interpolation of Theorem \ref{ohno}. 

For an admissible index $\boldsymbol{k}=(k_1\ldots,k_r)\in\mathbb{Z}_{\ge1}^r$ and $s\in\mathbb{C}$, we define
\[
 I_{\boldsymbol{k}}(s):=\sum_{i=1}^{r}\sum_{0<n_1<\cdots<n_r} \frac{1}{n_1^{k_1}\cdots n_r^{k_r}}
  \cdot \frac{1}{n_i^{s}} \prod_{j\ne i} \frac{n_j}{ n_j-n_i }.
\]
By Zhao-Zhou \cite[Proposition 2.1]{ZhZh11}, we can easily check that this series converges absolutely when $\Re(s)>-1$. 
This is a sum of special cases of $\zeta_{\mathfrak{sl}(r+1)}(\boldsymbol{s})$ which is called the Witten MZF associated with $\mathfrak{sl}(r+1)$. 
This function is first introduced in Matsumoto-Tsumura \cite{MaTs06}, which is also called the zeta function associated with the root system of type $A_r$ (for more details, see Komori-Matsumoto-Tsumura \cite{KMT10}), and continued meromorphically to the whole complex space $\mathbb{C}^{r(r+1)/2}$. 
Hence $I_{\boldsymbol{k}}(s)$ can be continued meromorphically to $\mathbb{C}$.
\begin{thm} \label{main}
 For an admissible index $\boldsymbol{k}$ and $s\in\mathbb{C}$, we have
 \begin{align*}
  I_{\boldsymbol{k}}(s)=I_{\boldsymbol{k}^\dagger}(s).
 \end{align*}
\end{thm}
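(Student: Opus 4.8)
The plan is to recognize $I_{\boldsymbol{k}}(s)$ as a meromorphic interpolation of the Ohno sums appearing in Theorem \ref{ohno}, to verify the desired duality at the nonnegative integers using Ohno's relation itself, and then to upgrade this to an identity of meromorphic functions by an analytic interpolation argument. The algebraic heart is the partial fraction identity
\[
 \prod_{i=1}^{r}\frac{n_i}{n_i-u}=\sum_{i=1}^{r}\frac{n_i}{n_i-u}\prod_{j\ne i}\frac{n_j}{n_j-n_i},
\]
valid for distinct $n_1,\ldots,n_r$. I would prove it by noting that both sides are rational in $u$, vanish as $u\to\infty$, and have simple poles only at $u=n_1,\ldots,n_r$; a residue computation at each $u=n_k$ shows the principal parts agree, so the two sides coincide.

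Next I would introduce the generating series $G_{\boldsymbol{k}}(u):=\sum_{m\ge0}u^{m}\sum_{e_1+\cdots+e_r=m}\zeta(k_1+e_1,\ldots,k_r+e_r)$. Expanding each geometric factor $\frac{n_i}{n_i-u}=\sum_{e\ge0}(u/n_i)^{e}$ and using $u^{m}=\prod_i u^{e_i}$ gives, for $|u|<1$,
\[
 G_{\boldsymbol{k}}(u)=\sum_{0<n_1<\cdots<n_r}\frac{1}{n_1^{k_1}\cdots n_r^{k_r}}\prod_{i=1}^{r}\frac{n_i}{n_i-u}.
\]
Applying the partial fraction identity termwise and extracting the coefficient of $u^{m}$ (the coefficient of $u^m$ in $\frac{n_i}{n_i-u}$ being $n_i^{-m}$) identifies that coefficient with $I_{\boldsymbol{k}}(m)$, so that
\[
 I_{\boldsymbol{k}}(m)=\sum_{e_1+\cdots+e_r=m}\zeta(k_1+e_1,\ldots,k_r+e_r)\qquad(m\in\mathbb{Z}_{\ge0}).
\]
All interchanges of summation are justified by absolute convergence: the double series defining $I_{\boldsymbol{k}}$ converges absolutely for $\Re(s)>-1$, and the $u$-expansion converges absolutely for $|u|<1$ since $n_i\ge1$. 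Ohno's relation (Theorem \ref{ohno}) then makes the right-hand side invariant under $\boldsymbol{k}\mapsto\boldsymbol{k}^\dagger$, whence $I_{\boldsymbol{k}}(m)=I_{\boldsymbol{k}^\dagger}(m)$ for every $m\in\mathbb{Z}_{\ge0}$.

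It remains to pass from the integers to all $s$, and this is where the main obstacle lies: agreement of two meromorphic functions at $\mathbb{Z}_{\ge0}$ does not by itself force them to be equal, so a growth estimate is essential. I would set $F(s):=I_{\boldsymbol{k}}(s)-I_{\boldsymbol{k}^\dagger}(s)$, holomorphic on $\Re(s)>-1$. Bounding each term by replacing $n_i^{-s}$ with $n_i^{-\Re(s)}$ and using the absolute convergence cited from Zhao--Zhou shows $|I_{\boldsymbol{k}}(s)|\le I_{\boldsymbol{k}}^{\mathrm{abs}}(\Re(s))\le I_{\boldsymbol{k}}^{\mathrm{abs}}(0)$ for $\Re(s)\ge0$, since $n_i\ge1$; thus $F$ is holomorphic and bounded on the closed right half-plane and vanishes at all nonnegative integers. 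Carlson's theorem (a bounded holomorphic function on $\Re(s)\ge0$ has exponential type $0<\pi$) then yields $F\equiv0$ on $\Re(s)\ge0$, and the identity theorem propagates the equality to the full meromorphic continuation on $\mathbb{C}$. The delicate point throughout is therefore the interpolation step: one must secure the uniform boundedness on a half-plane so that Carlson's theorem applies, rather than relying on the integer values alone.
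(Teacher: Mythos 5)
Your proposal is correct, and the first half (the partial fraction identity, the identification $I_{\boldsymbol{k}}(m)=\sum_{e_1+\cdots+e_r=m}\zeta(k_1+e_1,\ldots,k_r+e_r)$, and the appeal to Ohno's relation at nonnegative integers) is essentially the paper's Lemmas \ref{partial_frac} and \ref{ikohno}, just packaged through the generating function $\prod_i n_i/(n_i-u)$ rather than through the variables $a_i=1/n_i$. Where you genuinely diverge is the interpolation step. You establish that $F(s)=I_{\boldsymbol{k}}(s)-I_{\boldsymbol{k}^\dagger}(s)$ is bounded on $\Re(s)\ge 0$ (correctly, since $|n_i^{-s}|\le 1$ there and the $s$-free part of each term is absolutely summable by Zhao--Zhou) and invoke Carlson's theorem to conclude $F\equiv 0$, then spread the identity by meromorphic continuation. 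The paper instead observes that $I_{\boldsymbol{k}}(s)$ is itself a Dirichlet series in $s$, namely $\sum_{n}f(n)n^{-s}$ with $f(n)$ collecting the inner sums over the remaining $n_j$, and applies the elementary uniqueness theorem for Dirichlet series (Lemma \ref{dirichlet}): agreement at a sequence $s_k$ with $\Re(s_k)\to\infty$ forces equality of coefficients, hence equality on the entire half-plane of absolute convergence $\Re(s)>-1$ at once. Both routes are valid; the Dirichlet-series argument is lighter machinery and reaches $\Re(s)>-1$ directly, while your Carlson route requires the extra boundedness estimate and an identity-theorem step to get from $\Re(s)\ge 0$ back to $\Re(s)>-1$, but it correctly isolates the point you flag --- that vanishing at the integers alone proves nothing without a growth (or structural) hypothesis, which is exactly the role the Dirichlet-series structure plays in the paper.
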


\begin{rem}
 As we shall see in the next section, Theorem \ref{main} is a generalization of Theorem \ref{ohno} (see Lemma \ref{ikohno}).
\end{rem}

\section{Proof of theorem \ref{main} }
\begin{lem} \label{partial_frac}
 For $m\in\mathbb{Z}_{\ge 0}$ and $a_1,\ldots,a_r\in\mathbb{R}$ with $a_i\neq a_j$ for $i\neq j$, we have
 \begin{align*}  
  \sum_{\substack{ e_1+\cdots+e_r=m \\ e_i\ge0\,(1\le i\le r) }} 
  a_1^{e_1}\cdots a_r^{e_r} 
  =\sum_{i=1}^r a_i^{m+r-1} \prod_{j\ne i}(a_i-a_j)^{-1}. 
 \end{align*} 
\end{lem}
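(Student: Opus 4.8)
The plan is to recognize the left-hand side as the complete homogeneous symmetric polynomial $h_m(a_1,\dots,a_r)=\sum_{e_1+\cdots+e_r=m}a_1^{e_1}\cdots a_r^{e_r}$ and to read off both sides of the asserted identity from a single generating-function relation. Working in the ring of formal power series in an auxiliary variable $t$, I would begin from
\[
 \sum_{m\ge 0} h_m(a_1,\dots,a_r)\, t^m = \prod_{i=1}^r \frac{1}{1-a_i t},
\]
which follows immediately by expanding each factor $\frac{1}{1-a_it}=\sum_{e\ge 0}(a_it)^e$ as a geometric series and multiplying the $r$ series together, the coefficient of $t^m$ collecting exactly the monomials $a_1^{e_1}\cdots a_r^{e_r}$ with $e_1+\cdots+e_r=m$.

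Next I would apply partial fraction decomposition to the product. Since the $a_i$ are pairwise distinct, the rational function $\prod_{i=1}^r(1-a_it)^{-1}$ of $t$ has only simple poles, so there exist constants $c_1,\dots,c_r$ with
\[
 \prod_{i=1}^r \frac{1}{1-a_it} = \sum_{i=1}^r \frac{c_i}{1-a_it}.
\]
The cover-up method (evaluating $\prod_{j\ne i}(1-a_jt)^{-1}$ at $t=a_i^{-1}$) gives $c_i = \prod_{j\ne i}(1-a_j/a_i)^{-1} = a_i^{r-1}\prod_{j\ne i}(a_i-a_j)^{-1}$, where any index with $a_i=0$ simply contributes $c_i=0$ for $r\ge 2$ and so causes no trouble. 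Expanding $\frac{c_i}{1-a_it}=\sum_{m\ge 0}c_i a_i^m t^m$ and comparing the coefficient of $t^m$ on both sides of the displayed generating-function identity then yields
\[
 h_m(a_1,\dots,a_r) = \sum_{i=1}^r c_i a_i^m = \sum_{i=1}^r a_i^{m+r-1}\prod_{j\ne i}(a_i-a_j)^{-1},
\]
which is precisely the claim.

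The step I expect to be most delicate is the verification of the coefficients $c_i$: one must correctly evaluate $\prod_{j\ne i}(1-a_j/a_i)$ and track the factors of $a_i$ that turn it into $a_i^{-(r-1)}\prod_{j\ne i}(a_i-a_j)$, so that the exponent $m+r-1$ appears with the right sign conventions in the denominator. To avoid any convergence concern from the geometric expansions, I would phrase the entire argument as an identity of formal power series in $t$ over the field $\mathbb{R}(a_1,\dots,a_r)$, so that extracting the coefficient of $t^m$ is a purely algebraic operation. As an alternative, should one prefer to avoid generating functions, the identity can be established by induction on $r$, clearing denominators to reduce the inductive step to a one-variable partial fraction identity; this route is elementary but requires more careful bookkeeping of the products $\prod_{j\ne i}(a_i-a_j)^{-1}$.
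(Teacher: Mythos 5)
Your proposal is correct and follows essentially the same route as the paper: the paper's proof likewise writes the generating function $\prod_{i=1}^r(1-a_ix)^{-1}$, decomposes it into partial fractions $\sum_i A_i/(1-a_ix)$ with $A_i=a_i^{r-1}\prod_{j\ne i}(a_i-a_j)^{-1}$, and compares coefficients of $x^m$. You simply supply the details the paper leaves implicit (the cover-up computation of the coefficients, the formal-power-series framing, and the harmless $a_i=0$ case), which is a welcome elaboration rather than a different argument.
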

\begin{proof}
 By putting
 \[
  A_i:=a_i^{r-1} \prod_{j\ne i}(a_i-a_j)^{-1},
 \]
 we have
 \[
  \frac{1}{1-a_1x} \cdots \frac{1}{1-a_rx}=\frac{A_1}{1-a_1x}+\cdots+\frac{A_r}{1-a_rx}. 
 \]
 Then we find the desired result. 
\end{proof}
\begin{lem} \label{ikohno}
 For an admissible index $\boldsymbol{k}=(k_1,\ldots,k_r)\in\mathbb{Z}_{\ge1}^r$ and $m\in\mathbb{Z}_{\ge 0}$, we have
 \[
  I_{\boldsymbol{k}}(m)
  =\sum_{\substack{ e_1+\cdots+e_r=m \\ e_i\ge0\,(1\le i\le r) }}
   \zeta (k_1+e_1,\ldots,k_r+e_r).
 \]
\end{lem}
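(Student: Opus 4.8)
The plan is to evaluate both sides as iterated sums over $0<n_1<\cdots<n_r$ and to reduce the identity to the purely algebraic statement of Lemma \ref{partial_frac}. First I would expand the right-hand side by writing each $\zeta(k_1+e_1,\ldots,k_r+e_r)$ as its defining series and interchanging the (finite) sum over the compositions $e_1+\cdots+e_r=m$ with the sum over $0<n_1<\cdots<n_r$, which gives
\begin{align*}
 \sum_{\substack{ e_1+\cdots+e_r=m \\ e_i\ge0 }}
 \zeta (k_1+e_1,\ldots,k_r+e_r)
 =\sum_{0<n_1<\cdots<n_r}\frac{1}{n_1^{k_1}\cdots n_r^{k_r}}
 \sum_{\substack{ e_1+\cdots+e_r=m \\ e_i\ge0 }}
 \Bigl(\frac{1}{n_1}\Bigr)^{e_1}\cdots\Bigl(\frac{1}{n_r}\Bigr)^{e_r}.
\end{align*}

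Next, since the $n_i$ are pairwise distinct, I would apply Lemma \ref{partial_frac} with $a_i=1/n_i$ to the inner sum, obtaining $\sum_{i=1}^{r} n_i^{-(m+r-1)}\prod_{j\ne i}(1/n_i-1/n_j)^{-1}$. The product then simplifies via $1/n_i-1/n_j=(n_j-n_i)/(n_in_j)$, yielding
\begin{align*}
 \prod_{j\ne i}\Bigl(\frac{1}{n_i}-\frac{1}{n_j}\Bigr)^{-1}
 =n_i^{r-1}\prod_{j\ne i}\frac{n_j}{n_j-n_i}.
\end{align*}
Multiplying through, the powers combine as $n_i^{-(m+r-1)}\cdot n_i^{r-1}=n_i^{-m}$, so the inner sum becomes exactly $\sum_{i=1}^{r} n_i^{-m}\prod_{j\ne i} n_j/(n_j-n_i)$, which is precisely the summand appearing in the definition of $I_{\boldsymbol{k}}(m)$. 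Recognizing the resulting double sum as $I_{\boldsymbol{k}}(m)$ completes the argument.

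The only analytic point requiring care is the interchange of summation. Since $\boldsymbol{k}$ is admissible and $m\ge0$, each index $(k_1+e_1,\ldots,k_r+e_r)$ is again admissible, so every $\zeta$ on the right converges absolutely; because the sum over compositions is finite and all terms are positive, the rearrangement is justified (for instance by Tonelli's theorem). Everything else is the elementary algebraic simplification above, so I expect no genuine obstacle beyond bookkeeping the cancellation of powers of $n_i$.
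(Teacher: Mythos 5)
Your proposal is correct and follows essentially the same route as the paper: both reduce the identity to Lemma \ref{partial_frac} applied with $a_i=1/n_i$ and the simplification $1/n_i-1/n_j=(n_j-n_i)/(n_in_j)$, the only difference being that you start from the sum of zeta values and the paper starts from $I_{\boldsymbol{k}}(m)$. Your extra remark justifying the interchange of summation is a sensible addition but does not change the argument.
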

\begin{proof}
 By Lemma \ref{partial_frac}, we have
 \begin{align*}  
  I_{\boldsymbol{k}}(m) 
  &=\sum_{0<n_1<\cdots<n_r} \frac{1}{n_1^{k_1}\cdots n_r^{k_r}}
   \sum_{i=1}^{r} \left(\frac{1}{n_i}\right)^{m+r-1} \prod_{j\ne i} \frac{n_i n_j}{ n_j-n_i } \\
  &=\sum_{0<n_1<\cdots<n_r} \frac{1}{n_1^{k_1}\cdots n_r^{k_r}}
   \sum_{i=1}^r \left(\frac{1}{n_i}\right)^{m+r-1} \prod_{j\ne i} \left(\frac{1}{n_i}-\frac{1}{n_j}\right)^{-1} \\
  &=\sum_{0<n_1<\cdots<n_r} \frac{1}{n_1^{k_1}\cdots n_r^{k_r}}
   \sum_{\substack{ e_1+\cdots+e_r=m \\ e_i\ge0\,(1\le i\le r) }}
   \left(\frac{1}{n_1}\right)^{e_1}\cdots \left(\frac{1}{n_r}\right)^{e_r} \\
  &=\sum_{\substack{ e_1+\cdots+e_r=m \\ e_i\ge0\,(1\le i\le r) }}
   \zeta (k_1+e_1,\ldots,k_r+e_r). \qedhere 
 \end{align*}
\end{proof}

In the proof of Theorem \ref{main}, we use the following lemma (for details, see e.g., Apostol \cite{Apo98}). 
\begin{lem} \label{dirichlet}
 Given two Dirichlet series
 \[
  F(s):=\sum_{n=1}^\infty \frac{f(n)}{n^s} \quad\textrm{and}\quad G(s):=\sum_{n=1}^\infty \frac{g(n)}{n^s},
 \]
 both absolutely convergent for $\Re(s)>\sigma_a$. 
 If $F(s)=G(s)$ for each s in an infinite sequence $\{s_k\}$ such that $\Re(s_k)\rightarrow\infty$, 
 then $f(n)=g(n)$ for every $n$.
\end{lem}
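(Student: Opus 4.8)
The plan is to pass to the difference of the two series and show that a nonzero Dirichlet series cannot vanish along a sequence of points whose real parts tend to infinity. Setting $h(n):=f(n)-g(n)$ and $H(s):=F(s)-G(s)=\sum_{n=1}^\infty h(n)n^{-s}$, this difference is again absolutely convergent for $\Re(s)>\sigma_a$ and satisfies $H(s_k)=0$ for every $k$. It therefore suffices to prove that $h(n)=0$ for all $n$, since this is exactly the conclusion $f(n)=g(n)$.

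Suppose, for contradiction, that $h$ is not identically zero, and let $N$ be the least index with $h(N)\neq 0$. The idea is to isolate this leading coefficient by multiplying through by $N^{s}$:
\[
 N^{s}H(s)=h(N)+\sum_{n>N}h(n)\left(\frac{N}{n}\right)^{s}.
\]
Because $H(s_k)=0$, the left-hand side vanishes along the sequence, so $h(N)$ equals minus the tail sum evaluated at $s_k$. The entire argument then reduces to showing that this tail tends to $0$ as $\Re(s)\to\infty$, which would force $h(N)=0$ and contradict the minimality of $N$.

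The key estimate, and the only step that requires care, is the uniform control of the tail. I would fix any $\sigma_0$ with $\sigma_a<\sigma_0$, so that by absolute convergence the constant $C:=\sum_{n>N}|h(n)|(N/n)^{\sigma_0}$ is finite. For $\Re(s)=\sigma>\sigma_0$ the plan is to split $(N/n)^{\sigma}=(N/n)^{\sigma_0}(N/n)^{\sigma-\sigma_0}$ and to note that, for $n>N$, the factor $(N/n)^{\sigma-\sigma_0}$ is largest at $n=N+1$, giving
\[
 \left|\sum_{n>N}h(n)\left(\frac{N}{n}\right)^{s}\right|
 \le\left(\frac{N}{N+1}\right)^{\sigma-\sigma_0}\sum_{n>N}|h(n)|\left(\frac{N}{n}\right)^{\sigma_0}
 =C\left(\frac{N}{N+1}\right)^{\sigma-\sigma_0}.
\]
Since $N/(N+1)<1$, the right-hand side tends to $0$ as $\sigma\to\infty$. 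Applying this along $s=s_k$, whose real parts $\sigma_k$ eventually exceed $\sigma_0$, the vanishing $N^{s_k}H(s_k)=0$ yields $|h(N)|\le C\,(N/(N+1))^{\sigma_k-\sigma_0}\to 0$, hence $h(N)=0$, which is the desired contradiction. The main obstacle is exactly this uniform bound: one must decouple the geometric decay factor $(N/(N+1))^{\sigma-\sigma_0}$ from a series that converges at the fixed abscissa $\sigma_0$, rather than estimating directly at the varying abscissa $\sigma$, where no single convergent majorant is available.
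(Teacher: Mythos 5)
Your proof is correct: isolating the first nonzero coefficient $h(N)$, anchoring the tail at a fixed abscissa $\sigma_0>\sigma_a$, and extracting the geometric factor $(N/(N+1))^{\sigma-\sigma_0}$ is exactly the classical uniqueness argument. The paper does not prove this lemma itself but cites Apostol's textbook, and your argument is essentially that standard proof (Apostol, Theorem 11.3), so there is nothing to add.
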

\begin{proof}[Proof of Theorem \ref{main}]
 By Theorem \ref{ohno} and Lemma \ref{ikohno}, we have 
 $I_{\boldsymbol{k}}(s)=I_{\boldsymbol{k}^\dagger}(s)$ for $s\in\mathbb{Z}_{\ge 0}$.
 Since
 \[
  \sum_{n_i=1}^\infty \frac{1}{n_i^s} \left( \sum_{0<n_1<\cdots<n_r} \frac{1}{n_1^{k_1}\cdots n_r^{k_r}} \prod_{j\ne i} \frac{n_j}{n_j-n_i} \right)
 \]
 is a Dirichlet series for each $i$, the function $I_{\boldsymbol{k}}(s)$ is also a Dirichlet series.  
 By Lemma \ref{dirichlet}, we have
 $I_{\boldsymbol{k}}(s)=I_{\boldsymbol{k}^\dagger}(s)$ for $\Re(s)> -1$.
  Since 
 \begin{align*}
  I_{\boldsymbol{k}}(s)
  &=\sum_{i=1}^{r} \sum_{0<n_1<\cdots<n_r} \frac{1}{n_1^{k_1}\cdots n_r^{k_r}}
   \cdot \frac{1}{n_i^{s}} \prod_{j\ne i} \frac{n_j}{ n_j-n_i } \\
  &=\sum_{i=1}^{r} (-1)^{i-1} \sum_{m_1,\ldots,m_r=1}^\infty 
   \frac{1}{ m_1^{k_1-1}(m_1+m_2)^{k_2-1}\cdots(m_1+\cdots+m_r)^{k_r-1} } \\
   &\,\,\,\,\qquad \times \frac{1}{ (m_1+\cdots+m_i)^{s+1} } 
    \, \prod_{j<i} \frac{1}{m_{j+1}+\cdots+m_i }
    \, \prod_{j>i} \frac{1}{m_{i+1}+\cdots+m_j },
 \end{align*}
 $I_{\boldsymbol{k}}(s)$ can be regarded as the sum of the zeta functions associated with the root system of type $A_r$.
 Thus $I_{\boldsymbol{k}}(s)$ can be meromorphically continued to the whole space of $\mathbb{C}$.
\end{proof}

\section*{Acknowledgements}
The authors would like to thank Professor Shingo Saito, Doctor Nobuo Sato and Doctor Shin-ya Kadota for valuable comments. 
This work was supported by JSPS KAKENHI Grant Numbers JP18J00982, JP18K13392.


\end{document}